\documentclass[nobib]{tufte-handout}
\usepackage{amssymb,amsmath,amsthm}

\usepackage{hyperref}

\usepackage{graphicx} 
  \setkeys{Gin}{width=\linewidth,totalheight=\textheight,keepaspectratio}
  \graphicspath{{graphics/}} 
\usepackage{booktabs} 
\usepackage{units}    
\usepackage{multicol}

\newtheorem{thm}[subsection]{Theorem}
\newtheorem{defn}[subsection]{Definition}

\theoremstyle{definition}

\newtheorem{rmk}[subsection]{Remark}
%
%
%
\setcounter{secnumdepth}{2}

\title{On Suslin Matrices and their connection to Spin groups}
 \date{}

\author{Vineeth Chintala}

\begin{document}

\begin{NoHyper}

\maketitle

\chapter {Introduction}

\vskip 5mm

William Clifford (1878) constructed his algebras in order to generalise Complex numbers and Quaternions to higher dimensions. Though Clifford algebras have found wide applications in geometry, quantum mechanics and applied mathematics, their abstract construction may not be easy to work with. Even in the simplest case for a hyperbolic quadratic form, where Clifford algebras are total matrix rings, there is no simple isomorphism describing their generators. In this paper, we will give a simple description of Clifford algebras 
using what are called Suslin matrices. The identities followed by these matrices will then be 
used to study the corresponding Spin groups. Conversely one will now be able to relate
some (seemingly) accidental properties of Suslin matrices to the geometry of Clifford algebras. 

Let $R$ be a commutative ring and $V$ be a free $R$-module.
This article is on the Clifford algebra of the hyperbolic module $H(V) = V \oplus V^*$ 
equipped with the quadratic form $q(x,f) = f(x)$.
These algebras are isomorphic to total matrix rings of size $2^n \times 2^n$, where
$\dim V = n$. \vskip 3mm

The representation of Clifford algebras in terms of Suslin matrices is faithful only when $\dim V$ is odd and we will mostly be concerned with this case.
With an explicit construction at hand, one can expect to derive some results on the Clifford algebra and its Spin group 
using simple matrix computations. 
For example, the identities followed by the Suslin matrices (in Equation \ref{eq1})
give us (as we will see) the involution on the Clifford algebra. 
Indeed given any element of the Clifford algebra we will be able to explicitly construct its 
conjugate matrix under the involution. As an application of Suslin matrices, we give a proof of
the following exceptional isomorphisms : $$Spin_4(R) \cong SL_2(R) \times SL_2(R), \hskip 1mm Spin_6(R) \cong SL_4(R).$$

\vskip 2mm
On the other hand, some key properties of Suslin matrices (see Theorem \ref{fundamental} and the
margin-notes in \ref{rmk00}) are re-derived here (with minimal computation) using the connection to Clifford algebras. 
These properties play the crucial role in the papers \cite{JR}, \cite{JR2} which study an action of the orthogonal group 
on unimodular rows. In addition, this connection to Clifford algebras explains \textit{why} the size of a Suslin matrix has to be of the form
$2^{n-1} \times 2^{n-1}$. One can go on and ask if there are other matrix-constructions, possibly of smaller sizes.
Indeed, it turns out that the size chosen by Suslin is the least possible.
This is an \emph{embedding} problem of  a quadratic space into 
the algebra of matrices - it belongs to a larger theme of finding natural ways of constructing
algebras out of quadratic spaces. It would be interesting to see similar constructions for other types of 
quadratic forms -  and for this reason alone, it might be worthwhile having this connection between Suslin matrices
and Clifford algebras. 

\vskip 3mm

We begin with a few preliminaries on Clifford algebras
and Suslin matrices (to make the article accessible) and the link between them.
The involution of the Clifford algebra is then described via some identities
followed by the Suslin matrices. It turns out that the behavior of the Suslin matrices (hence that of the
Spin groups) depends on whether the dimension of $V$ is odd or even. 
One realizes this after observing the pattern followed by the involution in the odd and even cases. This leads us to
Section 5 where the action of the Spin group on the Suslin space is studied in detail. 
The last part of the paper contains a few remarks on the Spin and Epin groups. 

\vskip 2mm
By a ring, we always mean a ring with unity.
In this paper, $V$ will always denote a free $R$-module where $R$ is any commutative ring
and $H(V) = V \oplus V^*$. From now on, we fix the standard basis for $V= R^n$ and identify $V$ with its dual $V^*$. 
One can then write the quadratic form on $H(V)$ as 
$$q(v,w) = v\cdot w^T = a_1b_1 + \cdots + a_nb_n.$$
for $v =(a_1,\cdots,a_n)$, $w =(b_1,\cdots,b_n)$.
\sidenote{References : For general literature on Clifford algebras and Spin groups over a commutative ring, the reader is referred to 
\cite{B1}, \cite{B2} of H. Bass. For a more detailed introduction to Suslin matrices (and their important connection 
to unimodular rows), a few references are Suslin's paper \cite{S}, 
the papers \cite{JR}, \cite{JR2} of Ravi Rao and Selby Jose, and T. Y. Lam's book \cite{L} (sections III.7, VIII.5). }

\vskip 3mm

{\bf Acknowledgements.} 
I would like to thank my advisor Prof. Ravi Rao for the many discussions we had while working on this paper.
His willingness to give his time so generously has been very much appreciated. 

\hskip 2mm Part of this work was done during a short enjoyable visit to Fields Institute (in Toronto),
which funded me to attend its Spring school on Torsors in 2013. 
\vskip 5mm

\section{\textbf{The Suslin Construction}}
\vskip 4mm

The Suslin construction gives a sequence of matrices whose size doubles at every step. Moreover each Suslin matrix $S$ has a conjugate Suslin matrix $\overline{S}$
such that $S\overline{S}$ and $S + \overline{S}$ are 
scalars (norm and the trace).
\vskip 2mm

Let us pause here to see the recursive process by which the Suslin matrix $S_n(v,w)$ of size 
$2^n \times 2^n$,
and determinant $(v \cdot w^T)^{2^{n-1}}$, is constructed 
from two vectors $v,w$ in $R^{n+1}$. 
\vskip 2mm

Let $v = (a_0, v_1)$, $w = (b_0, w_1)$ where $v_1$, $w_1$ are vectors in $R^n$. Define

 $$ S_ 0(a_0,b_0) = a_0, \hskip 2mm  
 S_1(v,w) =  \begin{pmatrix}
a_0     &v_1    \\
-w_1    & b_0 \end{pmatrix}$$
 and 
 $$ 
 S_n(v,w) =  \begin{pmatrix}
a_0I_{2^{n-1}}      &  S_{n-1}(v_1,w_1) \\\\
-S_{n-1} (w_1, v_1)^T        &  b_0I_{2^{n-1}}\end{pmatrix}.$$
\vskip 2mm
For $S_n = S_n(v,w)$, define $$\overline{S_n} := S_n(w,v)^T =  \begin{pmatrix}
b_0I_{2^{n-1}}      &  -S_{n-1}(v_1,w_1) \\\\    
S_{n-1} (w_1, v_1)^T        &  a_0I_{2^{n-1}}\end{pmatrix}.$$
\vskip 3mm

For example if $v=(a_1,a_2)$ and $w =(b_1,b_2)$, then 
\[ S(v,w) =  \begin{pmatrix}
a_1 & a_2\\
-b_2 & b_1  \\
\end{pmatrix} , \hskip 2mm
 \overline{S(v,w)} =  \begin{pmatrix}
b_1 & -a_2\\
b_2 & a_1  \\
 \end{pmatrix} .
$$
\vskip 2mm

For $v=(a_1,a_2,a_3)$ and $w =(b_1,b_2,b_3)$,
 $$ S(v,w) =  \begin{pmatrix}
a_1 & 0 & a_2 & a_3\\
0   & a_1 & -b_3 & b_2  \\
-b_2 & a_3 & b_1 & 0 \\
-b_3 & -a_2 & 0  & b_1 \end{pmatrix} , \hskip 2mm
 \overline{S(v,w)} =  \begin{pmatrix}
b_1 & 0 & -a_2 & -a_3\\
0   & b_1 & b_3 & -b_2  \\
b_2 & -a_3 & a_1 & 0 \\
b_3 & a_2 & 0  & a_1 \end{pmatrix} .
\]
\vskip 3mm
For $v = (a_1,a_2,a_3,a_4)$ and $w= (b_1,b_2,b_3,b_4)$, the corresponding $8 \times 8$ Suslin matrices are
\[S(v,w) =  \begin{pmatrix}
a_1 & 0 & 0 & 0     \hskip 2mm & a_2 & 0 & a_3 & a_4           \\
0  & a_1 & 0 & 0   \hskip 2mm  & 0   & a_2 & -b_4 & b_3                \\
0 & 0 & a_1 & 0     \hskip 2mm & -b_3 & a_4 & b_2 & 0          \\
0 & 0 & 0  & a_1   \hskip 2mm  & -b_4 & -a_3 & 0  & b_2         \\
\\
-b_2 & 0 & a_3 & a_4       &b_1 & 0 & 0 & 0         \\
0   & -b_2 & -b_4 & b_3      & 0  & b_1 & 0 & 0      \\
-b_3 & a_4 & -a_2 & 0        &0 & 0 & b_1 & 0          \\
-b_4 & -a_3 & 0  & -a_2        &0 & 0 & 0  & b_1  \\
\end{pmatrix} $$ and
$$ \overline{S(v,w)} =  \begin{pmatrix}
b_1 & 0 & 0 & 0   \hskip 2mm   & -a_2 & 0 & -a_3 & -a_4           \\
0  & b_1 & 0 & 0  \hskip 2mm   & 0   & -a_2 & b_4 & -b_3                \\
0 & 0 & b_1 & 0    \hskip 2mm  & b_3 & -a_4 & -b_2 & 0          \\
0 & 0 & 0  & b_1\hskip 2mm     & b_4 & a_3 & 0  & -b_2         \\
\\
b_2 & 0 & -a_3 & -a_4       &a_1 & 0 & 0 & 0         \\
0   & b_2 & b_4 & -b_3      & 0  & a_1 & 0 & 0      \\
b_3 & -a_4 & a_2 & 0        &0 & 0 & a_1 & 0          \\
b_4 & a_3 & 0  & a_2        &0 & 0 & 0  & a_1  \\
\end{pmatrix} .
\]
\vskip 2mm

Notice that $\overline{S_n(v,w)}$ is also a Suslin matrix. Indeed, $\overline{S_n(v,w)} = S_n(v',w')$ where 
$$\begin{pmatrix}
v'    \\
w'\end{pmatrix} = \begin{pmatrix}
(b_0, -v_1)    \\
(a_0, -w_1)\end{pmatrix}.$$
In particular, we have $\overline{S_0(a_0,b_0)} = b_0.$

\vskip 1mm
One can easily check that a Suslin matrix $S_n = S_n(v,w)$ satisfies the following properties: \\

\[ S_n\overline{S_n} = \overline{S_n}S_n = (v\cdot w^T) I_{2^n},\]
\[ \det S_n = (v \cdot w^T)^{2^{n-1}}.\]

Unless otherwise specified, 
we assume that $n>0$. Then the element $(v,w)$ is determined by its corresponding Suslin matrix $S_n(v,w)$. So we sometimes identify the element $(v,w)$ 
with $S_n(v,w)$ for $n>0$.
The set of Suslin matrices of size $2^n \times 2^n$ is an $R$-module 
under matrix-addition and scalar multiplication given by $rS_n(v,w) = S_n(rv,rw)$  for $r\in R$; Moreover when $n >0$,
the mapping $S_n \rightarrow \overline{S_n}$ is an isometry of the quadratic space $H(R^{n+1})$, preserving the quadratic form $q(v,w) = v\cdot w^T$.
 
\vskip2mm
 
In his paper \cite{S}, A. Suslin then describes a sequence of matrices $J_n \in M_{2^n}(R)$ by the recurrence formula

 \begin{equation*}
J_n = \begin{cases}
1      &\text{for $n = 0$}\\
\\
\begin{pmatrix}
J_{n-1}         & 0\\
0         & -J_{n-1}\end{pmatrix} &\text{for $n$ even}\\
\\
 \begin{pmatrix}
0         & J_{n-1}\\
-J_{n-1}        & 0 \end{pmatrix}  &\text{for $n$ odd}.
\end{cases}
\end{equation*}

\vskip 2mm

It is easy to check that $\det (J)$ $=1$ and 
\sidenote{We will simply write $J$ and $S$ (or $S(v,w)$) and drop the subscript when there is no confusion. When there is some confusion, remember that 
the subscript $r$ in $S_r$ is used to indicate 
that $S_r$ (or $J_r$) is a $2^r \times 2^r$ matrix.}
\[J^T = J^{-1} = (-1)^{\frac{n(n+1)}{2}}J.\]

\vskip 2mm
The matrix $J$ is 
$\begin{cases}
&\text{skew-symmetric for $n = 4k+1$ and $n = 4k+2$,}\\
&\text{symmetric for $n = 4k$ and $n = 4k + 3$.}
\end{cases}
$

\vskip 3mm 
One can show inductively that the forms $J$ satisfy the following identities  :
\sidenote{The even case in Equation (\ref{eq1}) can also be deduced from Lemma 5.3, ~\cite{S}, while the odd case follows from \textit{loc. cit.} only if $v \cdot w^t$ is not a zero-divisor.}

\begin{equation}\label{eq1}
 JS^TJ^T = 
\begin{cases}
S &\text{ for $n$ even,}\\
\overline{S} &\text{ for $n$ odd.}
 \end{cases}
\end{equation}

The above relations will be used  to describe the involution on the Clifford algebra. 
The Suslin matrices will now be used to give a set of generators of the Clifford algebra.

\section{\textbf{The link to Clifford algebras}}

Let $R$ be any commutative ring. Let $(V,q)$ be a quadratic space where $V$ is a free $R$-module of $\dim $ $V =n$, equipped with a quadratic form $q$.
The Clifford algebra $Cl(V,q)$ is the quotient of the tensor algebra $$T(V) = R \oplus V\oplus V^{\otimes 2} \oplus \cdots \oplus V^{\otimes n} \oplus \cdots$$ 
by the two sided ideal $I(V,q)$ generated by all $x\otimes x - q(x)$ with $x \in V$. 
\vskip 2mm

Thus $Cl(V,q)$ is an associative algebra (with unity) over $R$  with a linear map 
$i : V \rightarrow Cl(V,q)$ such that $i(x)^2 = q(x)$. The terms $x\otimes x$ and $q(x)$ appearing in the generators of $I(V,q)$ have degrees $0$ and $2$ in the grading of $T(V)$. 
By grading $T(V)$ modulo $2$ by even and odd degrees, it follows that the Clifford algebra has a $Z_2$-grading $Cl(V,q) =  Cl_0(V,q)\oplus Cl_1(V,q)$. 
\vskip 2mm

The Clifford algebra $Cl(V,q)$ has the following universal property : Given any associative algebra $A$ over $R$ and any linear map 
$j : V \rightarrow A$ such that 
$$ j(x)^2 = q(x) \text{ for all $x \in V$},$$
then there is a unique algebra homomorphism $f : Cl(V,q) \rightarrow A$ such that 
$f \circ i = j$.

\vskip 2mm 
 
Let $Cl$ denote the Clifford algebra of $H(R^n)$ equipped with the quadratic form
$$q(v,w) = v\cdot w^T.$$ It can be proved (see \cite{B1}, Ch. 5, Theorem 3.9) that 
 $$Cl \cong M_{2^n}(R).$$ 

\vskip 2mm

Let $\phi : H(R^n) \rightarrow M_{2^n}(R)$ be the linear map defined by 
$$\phi(v,w) =  \begin{pmatrix}
0                  & S_{n-1}(v,w) \\
\overline{S_{n-1}(v,w)}         & 0 \end{pmatrix} $$

where $\overline{S_{n-1}(v,w)} = S_{n-1}(w,v)^T$. 
\vskip 2mm

Since $\phi(v,w)^2 = q(v,w)I_{2^n}$, the map $\phi$ uniquely extends to 
an $R$-algebra homomorphism $$\phi : Cl \rightarrow M_{2^n}(R)$$ (by the universal property of Clifford algebras). In the next section, we will prove that $\phi$ is an isomorphism.

\subsection{Optimal embedding of Suslin matrices}
The Clifford algebra and the Suslin matrix are two different ways of constructing algebras out of the quadratic space $H(R^n)$.
One can ask if one can construct similar matrices but of different sizes - possibly smaller?
First we need a notion of an embedding of  a quadratic space in an algebra to
compare two constructions. 

\vskip 2mm

Let $(V,q)$ be  a quadratic space (i.e. $V$ is a free $R$-module and $q$ a non-degenerate quadratic form on $V$). 
Let $A$ be a faithful $R$-algebra 
($R\hookrightarrow A$). 
\vskip 2mm

\begin{defn} 
The quadratic space $(V,q)$ is said to be embedded in $A$ if $V \subseteq A$ and
there is an isometry $ \alpha : V \rightarrow V$ such that
$$v\alpha(v) = \alpha(v)v = q(v).$$
\end{defn}

\vskip 2mm

The object of our interest is the hyperbolic quadratic space $H(R^n)$ with the quadratic form $q(v,w) = v \cdot w^T$. By taking $\alpha : H(R^n) \rightarrow H(R^n)$ (for $n>1$)
to be the isometry $$\alpha : S_{n-1}(v,w) \rightarrow \overline{S_{n-1}(v,w)}$$ one sees that the Suslin construction $(v,w) \rightarrow S(v,w)$ is an embedding of 
$H(R^n)$ into $M_{2^{n-1}}(R)$.
\vskip 1mm
One can ask which properties of Suslin matrices are unique to its construction and which ones
are true in general for any other embedding into matrices. 

The size of a Suslin matrix corresponding to $(v,w) \in H(R^n)$ is $2^{n-1} \times 2^{n-1}$; the Clifford algebra (another embedding of $H(R^n)$) 
is isomorphic  to the algebra of 
$2^n \times 2^n$ matrices. Are there embeddings of smaller sizes? Theorem \ref{thm1} tells us that the size of the Suslin matrices is optimal.
\vskip 2mm

Let us analyze an embedding $\alpha :H(R^n) \hookrightarrow A$ into an associative algebra $A$. 
We will see that this places a strong restriction on the choice of $A$.
Consider the $R$-linear map $\phi : H(R^n) \rightarrow M_2(A)$ defined by 
$\phi(x) =  \bigl(\begin{smallmatrix}
0                  & x \\
\alpha(x)          & 0 \end{smallmatrix}\bigr) .$
\vskip 2mm

Since $\phi(x)^2 =  q(x)$, the map $\phi$ uniquely extends (by the universal property of Clifford algebras) to 
an $R$-algebra homomorphism $$\phi : Cl \rightarrow M_2(A).$$

\begin{thm}\label{thm1}
Let $\phi$ be defined as above; then $\phi$ is injective.
\end{thm}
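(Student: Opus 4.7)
The plan is to leverage the cited isomorphism $Cl \cong M_{2^n}(R)$ together with the classification of two-sided ideals in a matrix ring over a commutative ring. Since $\phi$ is an $R$-algebra homomorphism, $\ker \phi$ is a two-sided ideal of $Cl$. Transported across $Cl \cong M_{2^n}(R)$, the kernel takes the form $M_{2^n}(J)$ for a unique ideal $J \subseteq R$, since every two-sided ideal of $M_{2^n}(R)$ arises in exactly this way.

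The second step is to pin down $J$ by restricting $\phi$ to scalars. Under the isomorphism, a scalar $r \in R \subseteq Cl$ corresponds to the scalar matrix $rI_{2^n}$, and $rI_{2^n}$ lies in $M_{2^n}(J)$ if and only if $r \in J$. On the other hand, since $\phi$ is an $R$-algebra homomorphism, $\phi(r) = r \cdot I_{M_2(A)}$, whose diagonal entries are $r \cdot 1_A$. The hypothesis that $R \hookrightarrow A$ (i.e.\ $A$ is faithful as an $R$-algebra) says precisely that $r \cdot 1_A = 0$ forces $r = 0$. Hence $J = 0$, so $\ker \phi = 0$ and $\phi$ is injective.

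I do not expect the main obstacle to be a computation: the proof is essentially formal once the right inputs are lined up. The only delicate point is keeping the roles of the three $R$-algebras ($Cl$, $M_{2^n}(R)$, and $M_2(A)$) straight and avoiding circularity---Theorem \ref{thm1} must not be used to derive $Cl \cong M_{2^n}(R)$, which is instead imported from Bass. Once this is clear, the entire argument reduces to the ideal-correspondence for matrix rings plus the faithfulness of $A$ over $R$, and the conclusion is immediate. As a byproduct, applying this with $A = M_{2^{n-1}}(R)$ and $\alpha(S) = \overline{S}$ gives injectivity of the Suslin-matrix representation $\phi : Cl \to M_{2^n}(R)$; combined with a rank count on both sides as free $R$-modules, this is exactly what is needed to upgrade $\phi$ to the isomorphism promised in the previous section.
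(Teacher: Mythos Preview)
Your proof is correct and follows essentially the same route as the paper: both invoke the Bass isomorphism $Cl \cong M_{2^n}(R)$, use the fact that every two-sided ideal of $M_{2^n}(R)$ has the form $M_{2^n}(I)$, and conclude $I=0$ from the faithfulness of $A$ over $R$ (equivalently, that $\phi$ is $R$-linear and restricts to the identity on $R$). Your remark about the Suslin-matrix case yielding an isomorphism by a rank count also matches the paper's immediate corollary.
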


\begin{proof}
Recall that we have an isomorphism of $R$-algebras $Cl \cong M_{2^n}(R)$ [\cite{B1}, Chapter 5, theorem 3.9], so it suffices to prove that the composite homomorphism 
$M_{2^n}(R) \rightarrow M_2(A)$ is injective.
Now every ideal of $Cl$ is of the form $ M_{2^n}(I)$. In particular $\ker(\phi) = M_{2^n}(I)$
for some ideal $I \subseteq R$. So to prove that $\phi$ is injective it is 
enough to observe that $I = 0$. But this is obvious since $\phi$ is $R$-linear and acts as identity on $R$.
\end{proof}

\vskip 2mm 

For the Suslin embedding $\alpha$ is the isometry $S(v,w) \rightarrow \overline{S(v,w)}$ defined for Suslin matrices in Section 2. 
Since the map $\phi$ is injective, it follows (by dimension arguments) that 
$\phi$ is an isomorphism for the Suslin embedding.
We will identify the elements $(v,w) \in H(R^n)$ with 
their images under the representation $\phi$. The following fundamental lemma of Jose-Rao in \cite{JR} 
is an easy consequence of the basic properties of Clifford algebras.
\vskip 5mm

\begin{thm}\label{fundamental}
 Let $X$ and $Y$ be Suslin matrices in $M_{2^{n-1}}(R)$. Then $XYX$ is also a Suslin matrix. Moreover $\overline{XYX} = \bar{X}\bar{Y}\bar{X}$.
\end{thm}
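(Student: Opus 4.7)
The plan is to lift everything to the Clifford algebra $Cl\cong M_{2^n}(R)$, use the elementary fact that the subspace $V\subset Cl$ is stable under the triple product $(x,y)\mapsto xyx$, and then read off both conclusions of the theorem from the $2\times 2$ block form of $\phi$.

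I would begin with the purely Clifford-algebraic observation that for $x,y\in V$ the relation $x^2=q(x)$ and its polarization give
$$xy+yx \;=\; q(x+y)-q(x)-q(y) \;\in\; R,$$
a scalar. Multiplying on the right by $x$ and using $x^2=q(x)$ again yields
$$xyx \;=\; \bigl(q(x+y)-q(x)-q(y)\bigr)\,x \;-\; q(x)\,y \;\in\; V,$$
so triple products of vectors stay in $V$.

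Next comes a block computation. Given Suslin matrices $X,Y\in M_{2^{n-1}}(R)$, I would set $X=S_{n-1}(x)$ and $\overline{Y}=S_{n-1}(y')$ for (unique) elements $x,y'\in H(R^n)$; this is legitimate because the remark after the definition of $\overline{S_n}$ shows that $\overline{Y}$ is itself a Suslin matrix, and the map $(v,w)\mapsto S_{n-1}(v,w)$ is injective for $n>0$. Since the recursive definition gives $\overline{\overline{Y}}=Y$, we have
$$\phi(x)=\begin{pmatrix}0 & X\\ \overline{X} & 0\end{pmatrix}, \qquad \phi(y')=\begin{pmatrix}0 & \overline{Y}\\ Y & 0\end{pmatrix},$$
and multiplying the three factors in the obvious way produces
$$\phi(x)\,\phi(y')\,\phi(x) \;=\; \begin{pmatrix}0 & XYX\\ \overline{X}\,\overline{Y}\,\overline{X} & 0\end{pmatrix}.$$

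To finish, since $xy'x\in V$ by the first step, $\phi(xy'x)$ automatically has the canonical shape $\bigl(\begin{smallmatrix}0 & Z\\ \overline{Z} & 0\end{smallmatrix}\bigr)$ with $Z$ a Suslin matrix. Comparing this with the block computation above forces $Z=XYX$ and $\overline{Z}=\overline{X}\,\overline{Y}\,\overline{X}$, which is exactly what the theorem asserts. The main thing to verify carefully is the block multiplication and the identity $\overline{\overline{Y}}=Y$ (both immediate from the recursive definitions); the Clifford step itself is the three-line identity above, so I would expect no substantive obstacle beyond keeping track of overlines.
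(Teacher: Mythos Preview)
Your proof is correct and follows essentially the same route as the paper: show $xyx\in H(R^n)$ via the polarization identity $xy+yx\in R$ and $x^2=q(x)$, then plug in $z_1=\bigl(\begin{smallmatrix}0 & X\\ \overline X & 0\end{smallmatrix}\bigr)$ and $z_2=\bigl(\begin{smallmatrix}0 & \overline Y\\ Y & 0\end{smallmatrix}\bigr)$ and read off the blocks of $z_1z_2z_1$. Your extra care in justifying why $\overline{Y}$ is itself a Suslin matrix (so that such a $z_2$ exists) is a point the paper leaves implicit.
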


\begin{proof}
 Let $z_1, z_2 \in H(R^n)$. Then $$\langle z_1, z_2\rangle := z_1z_2 + z_2z_1 = (z_1 + z_2)^2 - z_1^2 - z_2^2$$ is an element in $R$. 
 Multiplying by $z_1$ we get $$z_1\langle z_1,z_2 \rangle= z_1^2z_2 + z_1z_2z_1.$$
 \vskip2mm
 
Since $z_1^2 = q(z_1)$, it follows that $z_1z_2z_1 \in H(R^n)$. Take 
$z_1 =  \bigl( \begin{smallmatrix}
0          & X\\
\overline{X}     & 0 \end{smallmatrix}\bigr) $ 
and $z_2 =  \bigl(\begin{smallmatrix}
0          & \overline{Y}\\
Y     & 0 \end{smallmatrix}\bigr) $.
Then $z_1z_2z_1 
=  \bigl(\begin{smallmatrix}
0                             &  XYX \\
\bar{X}\bar{Y} \bar{X} & 0 \end{smallmatrix}\bigr) .$
\end{proof}

\subsection{The basic automorphism of $Cl$}

The Clifford algebra $Cl = Cl_0 \oplus Cl_1$  has a `basic automorphism' given by
$$ x= x_0 + x_1 \rightarrow x' = x_0 - x_1. $$

Two automorphisms of $Cl$ are the same if they agree on the elements $(v,w) \in H(R^n)$.
Under the isomorphism $\phi$ in Section 3, the basic automorphism corresponds to conjugation by the matrix
$\lambda =  \bigl(\begin{smallmatrix}
1       &  0  \\
0        &  -1 \end{smallmatrix}\bigr).$ One can infer this by checking that
$\bigl(\begin{smallmatrix}
1       &  0  \\
0        &  -1 \end{smallmatrix}\bigr)\phi(v,w)\bigl(\begin{smallmatrix}
1       &  0  \\
0        &  -1 \end{smallmatrix}\bigr) = -\phi(v,w)$.
It is also easy to check that 
\sidenote{One needs the basic automorphism $\lambda$  to make a suitable adjustment for the grading of the Clifford algebra - for example when defining 
the norm or a `graded' conjugation. We will be concerned mostly with the Spin group
which lies in $Cl_0$ (on which conjugation by $\lambda$ is the identity map).}
\begin{equation*}
 \lambda^* : = J\lambda^TJ^T = 
\begin{cases}
\lambda &\text{ for $n$ even,}\\
-\lambda &\text{ for $n$ odd.}
 \end{cases}
\end{equation*}

\vskip 5mm

\section{\textbf{The involution on $Cl$}}

\vskip 4mm
We will identify $Cl$ 
with $M_{2^n}(R)$ via the isomorphism $\phi$. Recall that $\phi$ is defined on the elements $(v,w)$ as 
$$\phi(v,w) =  \begin{pmatrix}
0                  & S_{n-1}(v,w) \\
\overline{S_{n-1}(v,w)}         & 0 \end{pmatrix}.$$

The map $(v,w) \rightarrow (-v,-w)$ can also be viewed as an inclusion of $H(R^n)$ in the opposite algebra of $Cl$. By the universal property of the Clifford algebra, 
this map extends to an anti-automorphism of $Cl$. We will call this the standard involution on $Cl$.

\begin{thm}
 Let $M \in Cl \cong M_{2^n}(R)$. The standard involution $*$ is given by  

\[M^* = J_nM^TJ_n^T.\]
\end{thm}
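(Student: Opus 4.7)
The plan is to exhibit the explicit map $\psi(M) := J_n M^T J_n^T$, check that it is an $R$-linear anti-automorphism of $M_{2^n}(R) \cong Cl$, and verify that it restricts to $-\mathrm{id}$ on the image $\phi(H(R^n))$ — by the universal property of the Clifford algebra this forces $\psi$ to coincide with the standard involution.

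First I would handle the formal step. Since $J_n J_n^T = I$ (because $J_n^T = \pm J_n$ and $\det J_n = 1$ combine to give $J_n$ orthogonal, as noted right after the definition of $J_n$), the map $\psi$ is clearly $R$-linear, and for any $M,N \in M_{2^n}(R)$
\[
\psi(MN) = J_n(MN)^TJ_n^T = J_n N^T (J_n^T J_n) M^T J_n^T = \psi(N)\psi(M),
\]
so $\psi$ is an anti-automorphism. (It is in fact an involution, but we do not need that.)

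Next, the heart of the argument: compute $\psi(\phi(v,w))$ and check it equals $-\phi(v,w)$. Writing $S := S_{n-1}(v,w)$, so that $\phi(v,w) = \bigl(\begin{smallmatrix} 0 & S \\ \overline{S} & 0 \end{smallmatrix}\bigr)$ and $\phi(v,w)^T = \bigl(\begin{smallmatrix} 0 & \overline{S}^T \\ S^T & 0 \end{smallmatrix}\bigr)$, I would split into the two parities of $n$ using the recursive block form of $J_n$. When $n$ is even, $J_n = \mathrm{diag}(J_{n-1},-J_{n-1})$, and a direct block multiplication yields
\[
\psi(\phi(v,w)) = \begin{pmatrix} 0 & -J_{n-1}\overline{S}^T J_{n-1}^T \\ -J_{n-1}S^T J_{n-1}^T & 0 \end{pmatrix}.
\]
Since $n-1$ is odd, Equation (\ref{eq1}) gives $J_{n-1}S^TJ_{n-1}^T = \overline{S}$, and applying the same identity to the Suslin matrix $\overline{S}$ gives $J_{n-1}\overline{S}^T J_{n-1}^T = \overline{\overline{S}} = S$. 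The result is $-\phi(v,w)$. When $n$ is odd, the off-diagonal block form of $J_n$ produces the same $-\phi(v,w)$, this time using the even case of (\ref{eq1}), which says $J_{n-1}S^TJ_{n-1}^T = S$ and $J_{n-1}\overline{S}^T J_{n-1}^T = \overline{S}$.

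Finally, having shown $\psi \circ \phi = -\phi$ on $H(R^n)$, both $\psi$ and the standard involution are anti-automorphisms of $Cl$ that agree on the generating set $\phi(H(R^n))$, hence they agree on all of $Cl$. The only non-routine part is the block computation in step two, which is really just the observation that the parity-split definition of $J_n$ was engineered precisely so that the two cases in (\ref{eq1}) conspire to produce the single clean formula $M^* = J_n M^T J_n^T$.
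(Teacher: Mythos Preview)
Your proof is correct and shares the paper's overall strategy: show that $M\mapsto J_nM^TJ_n^T$ is an anti-automorphism, then verify it acts as $-1$ on the generators $\phi(v,w)$, invoking Equation~(\ref{eq1}) for the key step. The execution differs slightly. You compute $J_n\phi(v,w)^TJ_n^T$ by direct block multiplication, splitting on the parity of $n$ through the block shape of $J_n$ and applying~(\ref{eq1}) at level $n-1$ to both $S_{n-1}$ and $\overline{S_{n-1}}$. The paper instead factors $\phi(v,w)=\lambda S'_n$, where $S'_n=S_n\bigl((0,v),(0,w)\bigr)$ is itself a $2^n\times 2^n$ Suslin matrix; it then applies~(\ref{eq1}) at level $n$ directly to $S'_n$ (noting $\overline{S'_n}=-S'_n$) and combines this with the already-computed $\lambda^*=\pm\lambda$. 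Your route is the more straightforward hands-on calculation; the paper's factorization is a small trick that hides the block case-split inside the separate parity behaviors of $(S'_n)^*$ and $\lambda^*$. Either way the content is the same and the difference is purely cosmetic.
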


\begin{proof}
That $*$ is an involution is clear since $J_n^T = J_n^{-1}$.
The elements $(v,w)$
generate the Clifford algebra.
Therefore, to prove that the above involution is the correct one, 
it is enough to check that its action on the matrices 
$$\phi(v,w) = \begin{pmatrix}
0                  & S_{n-1}(v,w) \\
\overline{S_{n-1}(v,w)}         & 0 \end{pmatrix} $$ is multiplication by $-1$. 
\vskip 4mm
Let $S'_n =  S_n(v',w') =\begin{pmatrix}
0                  & S_{n-1}(v,w) \\
 - \overline{S_{n-1}(v,w)}         & 0 \end{pmatrix} $ where $v' = (0,v)$ and $w'= (0,w)$. 
Then $$\phi(v,w) = \lambda S'_n = -S'_n\lambda.$$ 
\vskip 2mm 
Moreover observe that
 $\overline{S'_n} =  -S'_n.$ It follows by the identities in Equation (\ref{eq1}) that 

\begin{equation*}
{S'_n}^* =
\begin{cases}
S'_n &\text{ for $n$ even,}\\
-S'_n &\text{ for $n$ odd.}
\end{cases}
\end{equation*}

Therefore for $n$ even, we have
$$\phi(v,w)^* = {S'_n}^* \lambda^* = S'_n\lambda = -\lambda S'_n.$$

And for $n$ odd, we have
$$\phi(v,w)^* = {S'_n}^* \lambda^* = (-S'_n)(-\lambda) = -\lambda S'_n.$$

Hence for any $n$, 
\[\phi(v,w)^* = -\phi(v,w). \qedhere\]
\end{proof}

Given an element of the Clifford algebra, 
one can now compute its conjugate under the involution $*$. 
One can spot two different patterns of the involution depending on the parity of $n$. 
To see this, let us write $M =   \bigl(\begin{smallmatrix}
A          & B\\
C          & D \end{smallmatrix}\bigr) $ as a $2 \times 2$ matrix and 
analyze its conjugate in terms of its blocks.
Then one can compute $M^*$ inductively as follows:
(For $n=0$, the involution is the identity map on $R$.)
 
 \begin{equation}\label{eq2}
  \begin{pmatrix}
A          & B\\
C          & D \end{pmatrix} ^* = 
\begin{cases}
  \begin{pmatrix}
D^*          & -B^*\\
-C^*          & A^* \end{pmatrix}  &\text{for $n$ odd,}\\
\\
 \begin{pmatrix}
A^*          & -C^*\\
-B^*          & D^* \end{pmatrix}  &\text{for $n$ even.}
\end{cases}
\end{equation}
\vskip 2mm 

With this the setup is complete for us to 
analyze and compute the Spin groups.

\vskip 5mm
\subsection{ Spin group}
\vskip 3mm

The Clifford algebra is a $Z_2$-graded algebra $Cl = Cl_0 \oplus Cl_1$. Under the isomorphism $\phi$, the elements of $Cl_0$ correspond to matrices of the form 
$\bigl(\begin{smallmatrix}
g_1      &  0  \\
0        &  g_2 \end{smallmatrix}\bigr)$. 
\vskip 2mm

The following groups are relevant to our discussion : 
    $$U_{2n}^0(R) := \{x \in Cl_0 \, | \,  xx^* = 1\}.$$
    $$ Spin_{2n}(R):= \{x \in U_{2n}^0(R) \,| \, xH(R^n)x^{-1} = H(R^n)\}.$$  
\vskip 2mm
Let $(g_1,g_2) \in Spin_{2n}(R).$ We have by the identities in Equation (\ref{eq2}),

$$
  \begin{pmatrix}
g_1      &  0 \\
0        &  g_2 \end{pmatrix} ^* = 
 \begin{cases}
   \begin{pmatrix}
g_1^*      &  0  \\
0        &  g_2^* \end{pmatrix}  &\text{for $n$ even,}\\
\\
  \begin{pmatrix}
g_2^*      &  0\\
0        &  g_1^* \end{pmatrix}   &\text{for $n$ odd. }
\end{cases}
$$
\vskip 2mm
Where there is no confusion possible, 
we will write $(g_1, g_2)$ for the diagonal matrix 
$ \bigl(\begin{smallmatrix}
g_1      &  0  \\
0        &  g_2 \end{smallmatrix}\bigr) .$ 

\vskip 5mm

\section{ \textbf{The action of the Spin group on the space of Suslin matrices : When $n$ is odd}}
\vskip 4mm

In the introduction of \cite{JR}, the authors shared the insight of their referee (of \cite{JR2}) that one should be able to construct 
a subgroup $G_r(R)$ of $GL_{2^r}(R)$ which is defined by the property that $g \in G_r(R)$ if $gS(v,w)g^*$ is a Suslin matrix for all Suslin matrices $S \in M_{2^r}(R)$. 
From this action of $G_r(R)$ on the space of Suslin matrices $S(v,w)$, 
consider the subgroup $SG_r(R)$ consisting of $g \in G_r(R)$ preserving the norm $v \cdot w^T$ for all pairs $(v,w)$. 
The referee guessed that the group $SG_r(R)$ is isomorphic to the Spin group. The first step is to realize that the behaviour depends on the parity of $\dim V$.
\vskip 1mm
In this section, we construct such a group $G_r(R)$ and prove that there is such an isomorphism between 
$SG_r(R)$ and the corresponding Spin group when $n = \dim V$ is odd (i.e. $r = n-1$ is even). This is achieved by taking $g \rightarrow g^*$ to be the restriction of the 
involution on $M_{2^r}(R)$ defined earlier. In the rest of the section, $n = \dim V$ is odd. 
\vskip 2mm

Let $M =  \bigl(\begin{smallmatrix}
A          & B\\
C          & D \end{smallmatrix}\bigr)\in M_{2^n}(R)$.
When $n$ is odd, we have 
 $M^* = 
  \bigl(\begin{smallmatrix}
D^*          & -B^*\\
-C^*          & A^* \end{smallmatrix}\bigr) .$ Therefore for $(g_1,g_2) \in U_{2n}^0(R)$, we have
$$(g_1,g_2)^* = (g_2^*,g_1^*).$$  In addition, since $(g_1,g_2) \in U_{2n}^0(R)$ has unit norm, 
i.e. $(g_1, g_2)(g_1,g_2)^* = 1,$ it follows that
$$g_2 = g_1^{*^{-1}}.$$

\vskip 2mm
Now $Spin_{2n}(R)$ is precisely the subgroup of $U_{2n}^0(R)$ which stabilizes $H(R^n)$ under 
conjugation.
This means that if $(g, g^{*^{-1}}) \in Spin_{2n}(R)$ and $S \in M_{2^{n-1}}(R)$ is any Suslin matrix, 
then there exists a Suslin matrix $T \in M_{2^{n-1}}(R)$ such that 
$$(g, g^{*^{-1}})
 \begin{pmatrix}
0                  & S \\
\overline{S}           & 0 \end{pmatrix}
(g^{-1}, g^*) = 
 \begin{pmatrix}
0                  & T \\
\overline{T}          & 0 \end{pmatrix} $$
i.e.,
$$
 \begin{pmatrix}
0                  & gSg^* \\
g^{*^{-1}}\overline{S}g^{-1}           & 0 \end{pmatrix}  = 
 \begin{pmatrix}
0                  & T \\
\overline{T}         & 0 \end{pmatrix} .$$
\vskip 3mm

Hence if $S \in M_{2^{n-1}}(R)$ is a Suslin matrix and $(g, g^{*^{-1}}) \in Spin_{2n}(R)$, then $gSg^*$ is also a Suslin matrix.
\vskip 2mm

\begin{rmk}
The space of Suslin matrices is nothing
but the quadratic space $H(R^n)$. 
For simplicity, we will write $S$ instead of $S_{n-1}$ and 
when we say ``for all Suslin matrices $S$'', we really mean ``for all Suslin matrices $S \in M_{2^{n-1}}(R)$''.
\end{rmk}

\subsection{The action on the quadratic space $H(R^n)$}

Let $g\bullet S = gSg^*$.
Consider the group $$G_{n-1}(R) = \{\ g\in GL_{2^{n-1}}(R) \ | \text{ $g\bullet S$ 
is a Suslin matrix $\forall$ Suslin matrices $S$}\}.$$ 
One has the homomorphism
$$\chi : Spin_{2n}(R) \rightarrow G_{n-1}(R)$$ given by $(g,{g^*}^{-1}) \rightarrow g.$
\vskip 1mm

In general, this homomorphism is not surjective, but one can expect that it is the case on the subgroup of $G_{n-1}(R)$ which preserves the quadratic form $v \cdot w^T$.

For this we introduce a length function  on the space of Suslin matrices : for $S = S_{n-1}(v,w)$, define $$l(S) : = S\overline{S} = v\cdot w^T.$$ 
Let $$SG_{n-1}(R):= \{\  g \in G_{n-1}(R)\ |  \text{ $l(g \bullet S) = l(S)$ $\forall$ Suslin matrices $S$}\}.$$ 
Suppose $g \in SG_{n-1}(R)$. Then $l(gg^*) = 1.$ 
One expects $(g, g^{*^{-1}})$ to be an element of the Spin group. 
\vskip 2mm
\begin{thm}
 The homomorphism $\chi : Spin_{2n}(R) \cong SG_{n-1}(R)$ is an isomorphism.
\end{thm}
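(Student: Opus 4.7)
The plan is to dispose of well-definedness and injectivity of $\chi$ quickly, then concentrate on surjectivity. For well-definedness, when $(g, g^{-*}) \in Spin_{2n}(R)$, the computation preceding the theorem already shows $gSg^*$ is Suslin, hence $g \in G_{n-1}$; squaring is conjugation-invariant, so $(u\phi(v,w)u^{-1})^2 = u\phi(v,w)^2 u^{-1} = l(S)I$ forces $l(gSg^*) = l(S)$, giving $g \in SG_{n-1}$. Injectivity is immediate: if $\chi(g, g^{-*}) = g = 1$, then $g^{-*} = 1$.

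For surjectivity, fix $g \in SG_{n-1}(R)$ and set $u := (g, g^{-*}) \in M_{2^n}(R) \cong Cl$. Since $u$ is block-diagonal it lies in $Cl_0$, and $uu^* = I$ follows from the odd-$n$ formula $(g_1,g_2)^* = (g_2^*, g_1^*)$. It therefore suffices to check that conjugation by $u$ stabilizes $\phi(H(R^n))$. One computes
\[ u\,\phi(v,w)\,u^{-1} = \begin{pmatrix} 0 & T \\ Y & 0 \end{pmatrix}, \qquad T := gSg^*,\ Y := g^{-*}\overline{S}g^{-1}; \]
here $T$ is Suslin with $l(T) = l(S)$ by the hypothesis $g \in SG_{n-1}$, so the task reduces to showing $Y = \overline{T}$ for every Suslin $S$.

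The key input is the Suslin identity $S + \overline{S} = \tau(S) I$ (where $\tau(S)$ is the ``trace'' scalar), which yields the quadratic relation $T^2 - \tau(T) T + l(T) I = 0$. A direct calculation gives $TY = YT = l(S) I$. Multiplying the quadratic relation on the left by $Y$, substituting $YT = l(T) I$, and using $\overline{T} = \tau(T) I - T$, one obtains the annihilator identity
\[ l(S) \cdot (Y - \overline{T}) = 0. \]
Writing $M(S) := Y - \overline{T}$, both $Y$ and $\overline{T}$ are $R$-linear in $S$ (the latter because $\tau$ is $R$-linear on Suslin matrices), so $M$ is an $R$-linear map satisfying $l(S)\, M(S) = 0$ for every Suslin $S$.

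The main obstacle is upgrading this to $M(S) = 0$, since $l(S)$ may be a zero-divisor in $R$. I would handle this via base-change to $R[\lambda]$: pick $S_0$ with $l(S_0) = 1$ (e.g.\ $S_0 = I$), so that $T_0 := gS_0g^*$ has unit determinant, making $M(S_0) = 0$ directly. The entire setup survives base-change, as $SG_{n-1}$ is cut out by polynomial conditions on $g$. Applied to $S' := S + \lambda S_0$ over $R[\lambda]$, $R[\lambda]$-linearity yields $M(S') = M(S)$, while $l(S') = l(S) + \lambda \langle S, S_0\rangle + \lambda^2$. The identity
\[ \bigl(l(S) + \lambda \langle S, S_0\rangle + \lambda^2\bigr)\, M(S) = 0 \quad \text{in } M_{2^{n-1}}(R[\lambda]) \]
forces each $\lambda^k$-coefficient to vanish; the $\lambda^2$-coefficient is $M(S)$ itself, concluding that $M(S) = 0$ and hence that $(g, g^{-*}) \in Spin_{2n}(R)$.
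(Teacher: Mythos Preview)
Your argument is correct and takes a genuinely different route from the paper's. The paper's proof reduces immediately to unit-length Suslin matrices: if $l(S)=1$ then $l(T)=1$, so $\overline{T}=T^{-1}=g^{*^{-1}}S^{-1}g^{-1}=g^{*^{-1}}\overline{S}g^{-1}=Y$, and one then invokes the fact that unit-length Suslin matrices span the whole Suslin space to extend by linearity. You instead derive the algebraic annihilator identity $l(S)\,(Y-\overline{T})=0$ from the Cayley--Hamilton-type relation $T^2-\tau(T)T+l(T)I=0$, and then remove the zero-divisor obstruction by the base-change trick to $R[\lambda]$ with a generic translate $S+\lambda S_0$. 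What the paper's approach buys is brevity, at the cost of quoting a spanning lemma that is asserted rather than proved at that point; what yours buys is self-containment and a transparent handling of the zero-divisor issue, with the added benefit of making the role of the quadratic identity for Suslin matrices explicit. Both arguments require $n\geq 3$ for the ambient setup to make sense, and your verification that the defining conditions of $SG_{n-1}$ are preserved under base change (so that $g\in SG_{n-1}(R[\lambda])$) is the one step that deserves a sentence of justification in a final write-up.
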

\begin{proof}
 We first prove that if $g \in SG_{n-1}(R)$ then ${g^*}^{-1} \in G_{n-1}(R).$
Let $T = g\bullet S$. If $l(S) = l(T) = 1$, then $\overline{T} = T^{-1} = g^{*^{-1}}\bullet \overline{S}.$
Write a general Suslin matrix as a linear combination of unit-length Suslin matrices. 
By linearity of the action $\bullet$ 
it follows that $\overline{T} = g^{*^{-1}}\bullet \overline{S}$ for a general Suslin matrix $S$. It follows that we can define a homomorphism $SG_{n-1}(R) \rightarrow Spin_{2n}(R)$ 
by $g \rightarrow (g,g^{*^{-1}})$ and one checks easily that this is an inverse of $\chi$.
\end{proof}

\vskip 3mm

The assumption $l(g\bullet S)= l(S)$ is simply a translation of the definition of the Spin group in 
terms of Suslin matrices. It does not give us much insight into the action $\bullet$. 
A simpler equivalent criterion is the following: 
for $g \in G_{n-1}(R)$, we have 
$l(g\bullet S) = l(S)$ for all Suslin matrices $S$ if and only if $l(gg^*) = 1$. 
\vskip 1mm
This will be proved by replacing the length function which is defined on Suslin matrices with a `norm' which makes sense 
for any element $g \in G_{r-1}(R)$. The Spin group then corresponds to the subgroup consisting of elements which have unit norm.
In particular, the following theorem implies that $$ SG_{n-1}(R) = \{\  g \in G_{n-1}(R)\ | \ l(gg^*) =1\}.$$

\vskip 4mm

\begin{thm} \label{main} 
Let $g \in G_{n-1}(R)$. Then $l(g\bullet S) = l(gg^*)l(S)$ for all Suslin matrices $S$.
\end{thm}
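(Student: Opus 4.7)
The plan is to exploit two complementary factorizations involving $gSg^*$. First, $S\overline S = l(S)\,I$ gives, for any invertible $g$,
$$(gSg^*)({g^*}^{-1}\overline{S}\, g^{-1}) \;=\; g\,(S\overline S)\,g^{-1} \;=\; l(S)\,I.$$
Second, the hypothesis $g \in G_{n-1}(R)$ makes $gSg^*$ itself Suslin, so
$$(gSg^*)\,\overline{gSg^*} \;=\; l(gSg^*)\,I.$$
Multiplying the first by $l(gSg^*)$ and the second by $l(S)$ and subtracting, the matrix $gSg^*$ left-annihilates $l(S)\,\overline{gSg^*} - l(gSg^*)\,{g^*}^{-1}\overline{S}\, g^{-1}$. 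Since $\det(gSg^*) = \det(g)^2\,l(S)^{2^{n-2}}$ is a non-zero polynomial in the universal parameter ring, $gSg^*$ is there a non-zero-divisor, so the bracket itself vanishes, giving the key matrix identity
$$l(S)\,\overline{gSg^*} \;=\; l(gSg^*)\,{g^*}^{-1}\overline{S}\, g^{-1}. \qquad(\star)$$

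To turn $(\star)$ into a scalar statement, pair against an auxiliary Suslin matrix $T$. Multiplying $(\star)$ on the right by $gTg^*$ and taking trace, the cyclic cancellations $g^{-1}g = I$ and $g^*{g^*}^{-1} = I$ collapse the right-hand side and give
$$l(S)\operatorname{tr}(\overline{gSg^*}\cdot gTg^*) \;=\; l(gSg^*)\operatorname{tr}(\overline{S}\,T).$$
The same computation with $S \leftrightarrow T$ yields the companion $l(T)\operatorname{tr}(\overline{gTg^*}\cdot gSg^*) = l(gTg^*)\operatorname{tr}(\overline{T}\,S)$. The traces on the two left sides are equal, as are those on the right, thanks to the symmetry $\operatorname{tr}(X\overline Y) = \operatorname{tr}(Y\overline X)$ valid for all Suslin $X, Y$. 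This symmetry is immediate from $\overline X = t_X\,I - X$ (where $t_X = a_0(X) + b_0(X)$) together with $\operatorname{tr}(X) = 2^{n-2}\,t_X$: one computes $\operatorname{tr}(\overline Y X - \overline X Y) = t_Y\operatorname{tr}(X) - t_X\operatorname{tr}(Y) = 0$. Combining the two trace equations yields
$$\bigl[l(S)\,l(gTg^*) - l(T)\,l(gSg^*)\bigr]\operatorname{tr}(\overline S\,T) \;=\; 0.$$

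Finally, $(S, T) \mapsto \operatorname{tr}(\overline S\,T)$ is a non-degenerate symmetric bilinear form on the Suslin space $H(R^n)$ (in the natural basis it is diagonal of rank $2n$), so it is a non-zero-divisor in the universal ring parametrizing $(S, T)$. Hence $l(S)\,l(gTg^*) = l(T)\,l(gSg^*)$ holds as an identity; specializing $T = I$ and using $l(I) = 1$ gives the desired $l(gSg^*) = l(gg^*)\,l(S)$. As this is a polynomial identity in the coordinates of $g, v, w$, it descends to any commutative ring $R$. The main obstacle is the non-zero-divisor bookkeeping around $(\star)$ and around the final cancellation of $\operatorname{tr}(\overline S\,T)$; working throughout in the universal polynomial setting and then specializing makes this routine.
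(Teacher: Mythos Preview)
Your argument is creative and quite different from the paper's route (the paper first handles the case $l(gg^*)=1$ by exhibiting an explicit spanning set of the Suslin space on which the identity is checked by hand, and then reduces the general case by adjoining a square root of $l(gg^*)^{-1}$). Up through the derivation of $(\star)$ and the two trace identities, your reasoning is sound: over the polynomial ring $A=R[a_i,b_i,c_i,d_i]$ (coordinates of $S$ and $T$, with $g$ fixed in $G_{n-1}(R)$), $\det(gSg^*)=\det(g)^2\,l(S)^{2^{n-2}}$ is indeed a non-zero-divisor (McCoy's theorem, since $l(S)$ has a unit coefficient), so the cancellation giving $(\star)$ is legitimate.

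The gap is at the very last cancellation. A direct computation (e.g.\ using $\overline{S}T+\overline{T}S=\langle S,T\rangle I$) shows
\[
\operatorname{tr}(\overline S\,T)\;=\;2^{\,n-2}\bigl(v\cdot {w'}^{T}+v'\cdot w^{T}\bigr),
\]
so in the basis $E_i,F_i$ the Gram matrix is $2^{\,n-2}\bigl(\begin{smallmatrix}0&I\\ I&0\end{smallmatrix}\bigr)$, not a diagonal unit form. Thus $\operatorname{tr}(\overline S\,T)$ is a non-zero-divisor in $A$ \emph{only} when $2^{\,n-2}$ is a non-zero-divisor in $R$; for $n\ge 3$ (the relevant odd $n$) your cancellation fails whenever $2$ is a zero-divisor, in particular in characteristic~$2$. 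Your closing remark (``a polynomial identity in the coordinates of $g,v,w$, so it descends to any $R$'') does not rescue this: you cannot treat the entries of $g$ as free indeterminates, because the hypothesis $g\in G_{n-1}(R)$ cuts out a proper closed subscheme of $GL_{2^{n-1}}$ --- for generic $g$ the matrix $gSg^*$ is not Suslin and $\overline{gSg^*}$ is undefined, so $(\star)$ is not even available. Working instead over the coordinate ring of the $\mathbb{Z}$-scheme $G_{n-1}$ would require knowing that this ring is $\mathbb{Z}$-torsion-free, which you have not established (and which essentially presupposes the structure of $G_{n-1}$ that this very theorem is used to determine).

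A salvage is possible along your lines: rather than taking the trace, keep the full matrix identity. Combining the two versions of $(\star)$ one obtains that $\bigl(l(T)l(gSg^*)-l(S)l(gTg^*)\bigr)\,\overline S\,T$ is a \emph{scalar} matrix in $A$; a single off-diagonal entry of $\overline S\,T$ (which one checks has a unit coefficient as a polynomial in the $a_i,b_i,c_i,d_i$) then serves as the non-zero-divisor you need, with no factor of $2$. As written, however, the proof is incomplete.
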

\begin{proof}
\textbf{Case 1 : $l(gg^*) = 1.$}
\vskip 2mm
It is enough to show that $\overline{g \bullet S} = g^{*^{-1}} \bullet \overline{S}$ which in turn 
implies that $l(g\bullet S) = l(S)$. We will first prove this for a subset of Suslin matrices and then show that our 
chosen set spans the total space.
 
\vskip 1mm
 \textbf{Step 1 :}
If $X,Y$ are Suslin matrices then it follows by Theorem \ref{fundamental} that $$l(XYX) = l(X)^2l(Y).$$
Take $X = g\bullet S$ and $Y= (gg^*)^{-1}$. Then $XYX = g \bullet S^2$.
In addition we have $$gg^* \bullet (gg^*)^{-1} = gg^*.$$

Since $gg^*$ is a Suslin matrix and $l(gg^*) = 1$ 
(by the hypothesis) it follows that its inverse $Y$ is also a Suslin matrix with $l(Y) = 1$. Therefore 
$l(XYX) = l(X)^2,$ i.e.
$$l(g\bullet S^2) = l(g\bullet S)^2.$$
By induction we get that $$l(g\bullet S^{2^k}) = l(g\bullet S)^{2^k} \text{ for all $k \geq 1.$}$$
Suppose $l(S) = 1$.
From the definition of the length function, we have $$l(g\bullet S)^{2^{n-2}} = \det (g\bullet S) = \det (gg^*) \det (S) = 1.$$ 
Therefore 
\begin{equation}\label{eq3}
 l(g \bullet S^{2^k}) = 1, \text{ for all $k\geq n -2$.}
\end{equation}
\textbf{Step 2 :} Let \[W = \{\sum\limits_{i=1}^k  r_iT_i   \ | \text{$r_i \in R$ and $T_i = S_i^{2^{m_i}}$ 
for some $S_i$ with $l(S_i) =1$ and $m_i > n$}.\}\]
We will first prove the theorem for $W$ and then show that it spans the whole space.

\vskip 3mm
\noindent If $T = S^{2^m}$ with $m > n$ and $l(S) = 1$, then we have by Equation (\ref{eq3}) that 
$$\overline{g \bullet T} = ({g \bullet T} )^{-1} = g^{*^{-1}} \bullet \overline{T}.$$ 
For any $T \in W$ it follows by the linearity of the action $\bullet$ that 
$$g^{*^{-1}} \bullet \overline{T} =  g^{*^{-1}}\bullet (r_i \overline{T_i} + \cdots + r_k\overline{T_k}) = \overline{g \bullet T}.$$
Therefore $l(g \bullet T) = l(T)$ for all $T \in W.$ 
\vskip 1mm

\textbf{Step 3 :}
Let $\{e_i\}$ be the standard basis of $R^n$ and $\{f_i\}$ be its dual basis. We will now show that
$W$ contains the generators $S_{n-1}(e_i, 0)$ and $S_{n-1}(0,f_i)$, hence the whole space.
Let $$E_i  = S_{n-1}(e_i, 0), \hskip 2mm  F_i  = S_{n-1}(0,f_i).$$ 
\vskip 2mm

Let $X =  \bigl(\begin{smallmatrix}
1         & A\\
-\overline{A}          & 0 \end{smallmatrix}\bigr) $ for some Suslin matrix $A$ with $A\overline{A} =I$.
Then $X$ is also a Suslin matrix and $X^2 = \bigl(\begin{smallmatrix}
0       & A\\
-\overline{A}          & -1 \end{smallmatrix}\bigr) $ and 
$ X^3 = - I$. Therefore $X^{2^k} = X^2$ or $X^{2^k} = -X$, depending on whether $k$ is odd or even. 
It follows by the above discussion that $X \in W$. For example, one can take $ X = E_i + F_i + E_1$ with $i \neq1$. 
Also, if $i \neq j$ and $i,j \neq 1$, then $E_i + E_j + F_j + E_1$ is an example.
\vskip 2mm

Let $i \neq j $ and $i,j \neq 1$. 
Then $E_i \in W$ since $$ E_i = (E_i + E_j + F_j + E_1) - (E_j + F_j + E_1)$$ is a linear combination of matrices in $W$.
Similarly $F_i \in W$ since $$ F_i = (F_i + E_j + F_j +E_1) - (E_j +F_j +E_1).$$ 
And finally $ E_1 =  (E_i + F_i + E_1) - (E_i + F_i)$ and $F_1 = I - E_1$ also lie in $W$.  
 
 \vskip 3mm
 
\textbf{Case 2 : $l(gg^*) = a.$}
\vskip 2mm

Clearly $a$ has to be invertible since $g \in G_{n-1}(R)$.
Suppose there is an $x \in R$ such that $x^2 = a^{-1}.$ 

\vskip 2mm

Take $h = xg$. Then $l(hh^*) =1$ and
by Case 1, we have $$l(h\bullet S) = l(hh^*)l(S)$$ for any Suslin matrix $S$.
\vskip 2mm

We also have $l(h\bullet S) = x^2\cdot l(g\bullet S)$, hence $l(g\bullet S) = l(gg^*)l(S).$
\vskip 2mm

Now suppose $x^2 = a^{-1}$ has no solutions in $R$. Then one has the identity $$l(g\bullet S) = l(gg^*)l(S)$$ over the ring $\frac{R[x]}{(x^2 - a^{-1})}$; since each term of the equation 
lies in $R$, the theorem is proved in this case too.
\end{proof}

Let $R^\times$ denote the group of units in $R$.
\vskip 5mm
\begin{thm}\label{rmk}
Define $d : G_{n-1}(R) \rightarrow R^\times$ as 
\[d(g) : = l(gg^*).\]
Then $\ker (d) = SG_{n-1}(R) \cong Spin_{2n}(R).$
\end{thm}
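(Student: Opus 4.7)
The plan is to derive this theorem as an essentially immediate corollary of Theorem \ref{main} (which gives $l(g \bullet S) = l(gg^*) l(S)$) together with the isomorphism $\chi : Spin_{2n}(R) \cong SG_{n-1}(R)$ established just above.

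First I would check that $d$ is well-defined as a map into $R^\times$. The identity matrix is itself Suslin, namely $I_{2^{n-1}} = S_{n-1}(e_1, f_1)$, so the defining property of $G_{n-1}(R)$ applied to $S = I$ tells us that $g \bullet I = gg^*$ is a Suslin matrix, and therefore $l(gg^*) := (gg^*)\overline{gg^*}$ is a well-defined scalar. Taking determinants and using $\det S_{n-1} = l(S)^{2^{n-2}}$ together with $\det J = 1$ (so $\det g^* = \det g$) gives $l(gg^*)^{2^{n-2}} = \det(g)^2 \in R^\times$, hence $l(gg^*) \in R^\times$.

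Next I would verify that $d$ is a group homomorphism, which makes the word \emph{kernel} meaningful. Since $*$ is an anti-involution, $(g_1 g_2)^* = g_2^* g_1^*$, and consequently $(g_1 g_2) \bullet S = g_1 \bullet (g_2 \bullet S)$. Applying Theorem \ref{main} twice and then testing on $S = I$,
\[d(g_1 g_2) = l((g_1 g_2) \bullet I) = d(g_1)\, l(g_2 \bullet I) = d(g_1)\, d(g_2).\]

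Finally I would identify $\ker(d)$. If $d(g) = 1$, Theorem \ref{main} yields $l(g \bullet S) = l(S)$ for every Suslin matrix $S$, so $g \in SG_{n-1}(R)$; conversely, if $g \in SG_{n-1}(R)$, then taking $S = I$ (with $l(I) = 1$) forces $l(gg^*) = 1$, so $g \in \ker(d)$. Combined with the isomorphism $SG_{n-1}(R) \cong Spin_{2n}(R)$ of the preceding theorem, this is the claim. There is no real obstacle: all the content has already been packaged into Theorem \ref{main}, and the only mildly subtle point is justifying well-definedness, i.e.\ that $gg^*$ is a Suslin matrix so that $l(gg^*)$ and the determinant comparison even make sense.
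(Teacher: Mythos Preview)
Your proposal is correct and follows essentially the same route as the paper: both use Theorem \ref{main} to verify that $d$ is multiplicative (the paper writes this as $d(gh)=l(ghh^*g^*)=l(gg^*)l(hh^*)$, which is your computation with $S=I$), and then read off $\ker(d)=SG_{n-1}(R)$ from the characterization $SG_{n-1}(R)=\{g:l(gg^*)=1\}$ stated just before Theorem \ref{main}. Your added justification that $gg^*$ is Suslin and $l(gg^*)\in R^\times$ is a welcome detail the paper leaves implicit.
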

\begin{proof}
Let $g,h \in G_{n-1}(R)$. As a consequence of Theorem \ref{main}, we have  
$$d(gh) = l(ghh^*g^*) = l(gg^*)l(hh^*) = d(g)d(h).$$ Thus $d$ is a group homomorphism and 
$\ker (d) = SG_{n-1}(R) \cong Spin_{2n}(R).$
 \end{proof}

\section{ \textbf{When $n$ is even:}}

\vskip 5mm

Let $(g_1, g_2) \in Spin_{2n}(R)$. By definition if $S \in M_{2^{n-1}}(R)$ is a Suslin matrix, 
then there exists a Suslin matrix $T$ such that 
$$(g_1,g_2)
 \begin{pmatrix}
0                  & S \\
\overline{S}           & 0 \end{pmatrix}
(g_1^{-1}, g_2^{-1}) = 
 \begin{pmatrix}
0                  & T \\
\overline{T}           & 0 \end{pmatrix} $$
i.e.,
$$
 \begin{pmatrix}
0                  & g_1Sg_2^{-1} \\
g_2\overline{S}g_1^{-1}           & 0 \end{pmatrix}  = 
 \begin{pmatrix}
0                  & T \\
\overline{T}          & 0 \end{pmatrix} .$$

\noindent Let us now consider the group homomorphism $\chi : Spin_{2n}(R) \rightarrow GL_{2^{n-1}}(R)$ given by 
$$\chi(g_1,g_2) = g_1.$$

\noindent In this case the projection $\chi$ has a nontrivial kernel. Let $(1,g)\in$ $\ker(\chi)$. Taking $S=1$, one sees that $g$ is also a Suslin matrix.
Now both $g$ and $Sg^{-1}$ are Suslin matrices for any Suslin matrix $S$. 
In the case when $n > 2$, one can then conclude by a simple computation that $g = uI$ where $u \in R$ and $u^2=1$ 
(see \cite{JR}, Lemma 3.1).

\vskip 2mm

\noindent Define $\mu_2 := \{u \in R \,| \, u^2 = 1\}.$
Then one has an following exact sequence when $n > 2$ and $n$ even :
\begin{equation}
 1 \rightarrow \mu_2 \rightarrow Spin_{2n}(R)\rightarrow  GL_{2^{n-1}}(R). 
\end{equation}
\vskip 3mm

\textit{Question} : Let $SG'_{n-1}(R) = \chi (Spin_{2n}(R))$. For what conditions on $R$ is there a homomorphsim
$f : SG'_{n-1}(R) \rightarrow Spin_{2n}(R)$ such that $f \circ \chi = Id$?
\vskip 2mm

Recall that when $n$ is even,  $(g_1,g_2)^* = (g_1^*,g_2^*).$
From this one can immediately compute $Spin_4(R)$.
Firstly any $2 \times 2$ matrix is of the form $ S(v,w) =  \bigl(\begin{smallmatrix}
a_1 & a_2\\
-b_2 & b_1  \\
\end{smallmatrix}\bigr) $ for $v = (a_1, a_2)$, $w=(b_1, b_2)$.
Therefore $g_1Sg_2^{-1}$ is a Suslin matrix for any pair $(g_1,g_2)$ and $S\in M_2(R)$.
Moreover it is clear from the definition of the involution that the norm is precisely
the determinant for $2 \times 2$ matrices.
 $$(g_1, g_2)(g_1^*, g_2^*) = (\det g_1, \det g_2).$$ 
Therefore $$Spin_4(R) = SL_2(R) \times SL_2(R).$$
For another proof, see [\cite{K}, Chapter V, $\S$ 4.5].
\vskip 5mm

\begin{rmk}
 An immediate corollary is that $Spin_4(R)$ acts transitively on the set of $2 \times 2$ Suslin matrices $S(v,w)$ of unit length (i.e. $v\cdot w^T = 1$).  
The set of unit length Suslin matrices is nothing but $SL_2(R)$ and the action is given by $S \rightarrow g_1Sg_2^{-1}$ for $(g_1,g_2) \in Spin_4(R)$ and $S\in SL_2(R)$.

\end{rmk}

\vskip 5mm
\section{\textbf{Computing $Spin_6(R)$}}

\vskip 3mm

In this section, we compute $Spin_6(R)$ using Suslin matrices. For another proof of the following result, see [\cite{K}, Chapter V, $\S$ 5.6].
\vskip 3mm
\begin{thm} 
$Spin_6(R) \cong SL_4(R).$ 
\end{thm}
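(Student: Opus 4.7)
The plan is to invoke Theorem \ref{rmk} with $n=3$ (odd), giving $Spin_6(R) \cong SG_2(R) \subseteq GL_4(R)$, and then identify $SG_2(R)$ with $SL_4(R)$. The first step is to show that the ``preserves Suslin'' condition is automatic on $GL_4(R)$, i.e.\ $G_2(R) = GL_4(R)$. By Equation (\ref{eq1}) applied with $n=2$ (even), any Suslin $S \in M_4(R)$ satisfies $S^* = S$, so $gSg^*$ is self-adjoint for every $g \in GL_4(R)$. Writing $g = \bigl(\begin{smallmatrix} A & B \\ C & D \end{smallmatrix}\bigr)$ and $S = \bigl(\begin{smallmatrix} aI & P \\ -P^* & bI \end{smallmatrix}\bigr)$ with all blocks in $M_2(R)$, a block expansion of $gSg^*$ uses the two $2\times 2$ identities $XX^* = \det(X)\,I$ and $X + X^* = \mathrm{tr}(X)\,I$ to force the diagonal blocks to be scalar multiples of $I$ and the off-diagonal blocks to satisfy $C' = -B'^{\,*}$. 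These are precisely the constraints that cut out the Suslin matrices inside $M_4(R)$.

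Next I compute the character $d(g) = l(gg^*)$ of Theorem \ref{rmk} and show it equals $\det(g)$. Since $J_2^T = J_2^{-1}$, the formula $g^* = J_2 g^T J_2^T$ makes $g^*$ similar to $g^T$, hence $\det(g^*) = \det(g)$ and $\det(gg^*) = \det(g)^2$. On the other hand, the Suslin identity $\det S_2(v,w) = (v \cdot w^T)^2 = l(S_2(v,w))^2$ applied to the Suslin matrix $gg^*$ gives $\det(gg^*) = l(gg^*)^2$. Thus $l(gg^*)^2 = \det(g)^2$ as a polynomial identity in the $16$ entries of $g$. In the UFD $\mathbb{Z}[g_{ij}]$ the generic determinant is irreducible, so $l(gg^*) = \pm \det(g)$; evaluating at $g = I$ pins down the $+$ sign.

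Combining, $SG_2(R) = \ker(d) = \{g \in GL_4(R) : \det g = 1\} = SL_4(R)$, and Theorem \ref{rmk} yields $Spin_6(R) \cong SL_4(R)$. The only nontrivial step is the block computation showing $gSg^*$ is Suslin: the two $M_2$-identities $XX^* = \det(X)\,I$ and $X + X^* = \mathrm{tr}(X)\,I$ are standard, but one has to track all four blocks of the product to verify that the Suslin structure (scalar diagonal and the $C' = -B'^{\,*}$ relation) is preserved.
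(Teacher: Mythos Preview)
Your argument is correct and the overall architecture matches the paper: establish $G_2(R)=GL_4(R)$ via a block computation using the $2\times 2$ identities $XX^*=(\det X)I$ and $X+X^*=(\operatorname{tr}X)I$, then identify $SG_2(R)=\ker(d)$ with $SL_4(R)$.

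The one genuine difference is in how you pin down $d(g)=l(gg^*)=\det g$. The paper simply writes out $MM^*$ in $2\times 2$ blocks and reads off
\[
l(MM^*)=AA^*DD^*+BB^*CC^*+AC^*DB^*+BD^*CA^*,
\]
recognising this as $\det M$ by hand. You instead argue indirectly: from $l(S)^2=\det S$ for Suslin $S$ and $\det(g^*)=\det(g)$ you get $l(gg^*)^2=\det(g)^2$ as a polynomial identity in $\mathbb{Z}[g_{ij}]$, and since this ring is a domain (irreducibility of $\det$ is not even needed here --- the factorisation $(l(gg^*)-\det g)(l(gg^*)+\det g)=0$ already forces one factor to vanish) you get $l(gg^*)=\pm\det g$, with the sign fixed by $g=I$. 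This is cleaner and avoids expanding the $4\times 4$ determinant, at the modest cost of passing through the universal ring. Your preliminary observation that $S^*=S$ (from Equation~(\ref{eq1}) at level $2$) is also a nice shortcut the paper only mentions afterwards in a remark: it immediately gives the off-diagonal relation $C'=-B'^{\,*}$ for $gSg^*$, so the block computation only has to verify that the diagonal blocks are scalar.
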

\vskip 2mm
\begin {proof}
 
We will prove that $SG_2(R) \cong SL_4(R)$.

\vskip 2mm

First it will be shown that given any $4 \times 4$ matrix $M$, the product $MSM^*$ is 
a Suslin matrix whenever $S$ is a Suslin matrix.
\vskip 3mm

Write $S = \bigl(\begin{smallmatrix}
a         &  S_1\\
-\overline{S}_1         & b \end{smallmatrix}\bigr) $ 
and
$ M = \bigl(\begin{smallmatrix}
A          & B\\
C          & D \end{smallmatrix}\bigr) .$ 
Since $M$ is a $4 \times 4$ matrix, we have
$$M ^* =  \big(\begin{smallmatrix}
A^*          & -C^*\\
-B^*          & D^* \end{smallmatrix}\bigr) .$$
\vskip 2mm

To conclude that $MSM^*$ is a Suslin matrix, it is enough to check that both
$M  \bigl(\begin{smallmatrix}
a         &  0\\
0         & b \end{smallmatrix}\bigr)  M^*$ and 
$M  \bigl(\begin{smallmatrix}
0       &  S_1\\
-\overline{S}_1         & 0 \end{smallmatrix}\bigr) M^*$ are Suslin matrices.
\vskip 3mm

We have 

$$M  \begin{pmatrix}
a         &  0\\
0         & b \end{pmatrix}  M^* = 
  \begin{pmatrix}
aAA^* - bBB^*        &  -aAC^* + bBD^*\\
aCA^*- bDB^*        &  -aCC^* + bDD^*\end{pmatrix} $$ 

and

$$M  \begin{pmatrix}
0       &  S_1\\
-\overline{S}_1         & 0 \end{pmatrix} M^* =
  \begin{pmatrix}
-B\overline{S}_1A^* - AS_1B^*      &  B\overline{S}_1C^* + AS_1D^*\\
-D\overline{S}_1A^* - CS_1B^*         & D\overline{S}_1C^* + CS_1D^* \end{pmatrix} .$$

Recall that any $2 \times 2$ matrix $X =  \bigl(\begin{smallmatrix}
x       &  y\\
z       & w \end{smallmatrix}\bigr)  $ is a Suslin matrix and $X^* = \overline{X} =  \bigl(\begin{smallmatrix}
w      &  -y\\
-z       & x \end{smallmatrix}\bigr)  $. 
Also, $X^* + X$ are $XX^*$ are scalar matrices. 
\vskip 2mm

A $4 \times 4$ matrix $ \bigl(\begin{smallmatrix}
X        & Y\\
Z          & W \end{smallmatrix}\bigr) $ is a Suslin matrix if and only if
$X, W$ are scalar matrices and $Y = -Z^*$.
Therefore $MSM^*$ is a Suslin matrix.
\vskip 2mm

It remains to show that if $g \in SG_2(R)$, then $\det g = 1$.
By definition $l(S)^2 = \det(S)$ for any Suslin matrix $S \in M_4(R)$. One should be able to prove from this that $\det g = l(gg^*) = 1.$  
\vskip 3mm

One can also check by hand that for $4 \times 4$ matrices, $l(MM^*) = \det M$.
We have $$M M^* = 
  \begin{pmatrix}
AA^* - BB^*        &  -AC^* + BD^*\\
CA^*- DB^*        &  -CC^* + DD^*\end{pmatrix} $$  and 
\[l(MM^*) = AA^*DD^* + BB^*CC^* + AC^*DB^* + BD^*CA^*. \qedhere \]
\end{proof}

\vskip 3mm 

\begin{rmk}
 Suppose $2$ is not a zero divisor in $R$. 
 Then a matrix $M \in M_4(R)$ is a Suslin matrix if and only if $M = M^*$, 
 i.e. the set of Suslin matrices is 
 the Jordan algebra consisting of self-adjoint elements in $M_4(R)$.
 In particular
 $MSM^*$ is a Suslin matrix for any $M \in M_4(R)$. 
 Observe that the set 
 $\{gg^* |\ g \in SL_4(R)\}$ is precisely the orbit of the identity element under the action
 of the Spin group. Therefore $Spin_6(R)$ acts transitively on the set of elements of length 
 $1$ if and only if every self-adjoint matrix in $SL_4(R)$ can be factorized as $S = gg^*$ 
 for some $g \in SL_4(R).$ 
 \vskip 2mm
 We have already seen that the Spin group 
 acts transitively in the case $n=2$.
 A related open question is whether the Orthogonal group acts transitively on the unit sphere (set of $(v,w)$ such that
 $v \cdot w^T =1$) for any
 commutative ring $R$. In \cite{S} (see Lemma 5.4) it has been proved that this is indeed the case when 
 $n =4$. The question is open for other dimensions. 
\end{rmk}

\vskip 5mm

\section{\textbf{ The group $Epin_{2n}(R)$}}
\vskip 4mm

Let $\partial$ denote the permutation $(1 \ n+1)...(n \ 2n)$ corresponding to the form 
$ \bigl(\begin{smallmatrix}
0          & I_n\\
I_n     & 0 \end{smallmatrix}\bigr) .$
We define for $1 \leq i \neq j \leq 2n$, $z\in R$,
$$oe_{ij}(a) = I_{2n} + ae_{ij} - ae_{\partial(j)\partial(i)}. $$

It is clear that when $a \in R$ all these matrices belong to $O_{2n}(R)$. We call them the elementary orthogonal matrices over $R$
and the group generated is called the elementary orthogonal group $EO_{2n}(R).$

\vskip 2mm
By definition there is a map $\pi : Spin _{2n}(R) \rightarrow O_{2n}(R)$ given by
$$\pi(g) : v \rightarrow gvg^{-1} \text{ for $g \in Spin_{2n}(R)$}.$$
\vskip 2mm

We denote by $Epin_{2n}(R)$ the inverse image of $EO_{2n}(R)$ under the map $\pi$. 

\vskip 2mm

Let $V = R^n$ with standard basis $e_1,\cdots, e_n$ and dual basis $f_1,\cdots, f_n$ for $V^*$. 
In terms of Suslin matrices, we have
$$e_i =  \begin{pmatrix}
0          & S_{n-1}(e_i, 0)\\
\overline{S_{n-1}(e_i, 0)}    & 0 \end{pmatrix} $$  
and
$$f_i =  \begin{pmatrix}
0          & S_{n-1}(0, f_i)\\
\overline{S_{n-1}(0, f_i)}    & 0 \end{pmatrix} $$

\vskip 5mm
\begin{rmk}
Observe that if $ \bigl(\begin{smallmatrix}
g_1          & 0\\
0     & g_2 \end{smallmatrix}\bigr) $ is an element of the Spin group, then so is $ \bigl(\begin{smallmatrix}
g_2         & 0\\
0     & g_1 \end{smallmatrix}\bigr) $.
We have
$$(g_2,g_1) = u\cdot (g_1,g_2)\cdot u$$
where $u =  \bigl(\begin{smallmatrix}
0          & I_n\\
I_n    & 0 \end{smallmatrix}\bigr)$
Below, we get a different basis of $Epin_{2n}(R)$ by conjugating its elements with $u$.
\end{rmk}

\vskip 5mm
\begin{rmk}\label{rmk2}
It can be proved 
(see \cite{B2}, section 4.3) that $Epin_{2n}(R)$ is generated by elements of the form $1 + ae_ie_j$ and  $1 + af_if_j$ with $a \in R$,
$1 \leq i,\ j \leq n$, $i \neq j$. Then it follows from the above symmetry of the Spin group that elements
$u(1 + ae_ie_j)u$ and  $u(1 + af_if_j)u$ also generate $Epin_{2n}(R)$ for $a \in R$. 
Therefore $Epin_{2n}(R)$ is generated by elements of the type $$1 + ae'_ie'_j, \hskip 2mm  1 + af'_if'_j$$ where

$$e'_i = e_iu =  \begin{pmatrix}
S_{n-1}(e_i, 0)        & 0\\
 0   & \overline{S_{n-1}(e_i, 0)}\end{pmatrix} $$
 and
 
 $$f'_i = f_iu =  \begin{pmatrix}
S_{n-1}(0, f_i)        & 0\\
 0   & \overline{S_{n-1}(0, f_i)}\end{pmatrix}.$$
\end{rmk}
 \vskip 2mm
 
In the rest of the section we will work with the case where n is odd. Recalling Remark \ref{rmk} we have 
$$Spin_{2n}(R) \cong SG_{n-1}(R):= \{\  g \in G_{n-1}(R) | \ l(gg^*) = 1\},$$ where the isomorphism is the projection 
$\chi :(g, {g^*}^{-1} ) \rightarrow g$.
\vskip 1mm

Let $EG_{n-1}(R)$ denote the image of $Epin_{2n}(R)$ under the above isomorphism. 
Remark \ref{rmk2} tells us that the group $EG_{n-1}(R)$ is generated by elements
of the type $$1 + aE_iE_j, \hskip 2mm 1+ aF_iF_j$$ where 
$$E_i = S_{n-1}(e_i, 0), \hskip 2mm  F_i = S_{n-1}(0,f_i).$$
\vskip 4mm

Notice that $E_1 = \bigl(\begin{smallmatrix}
I          & 0\\
0     &  0\end{smallmatrix}\bigr) $ and $F_1 = \bigl(\begin{smallmatrix}
0         & 0\\
0     & I \end{smallmatrix}\bigr)$. For $i \neq 1$, the element $E_i$ is of the form $\bigl(\begin{smallmatrix}
0         & X_i\\
-\overline{X_i}     & 0 \end{smallmatrix}\bigr)$ for some Suslin matrix $X_i$ and $l(E_i) = X_i\overline{X_i} =0$.

\vskip 2mm
The elements $E_i$, $F_i$ satisfy the properties in Table $\ref{commutator}$ :

\begin{margintable}[1pt]
\caption{$i \neq1$ }
  \begin{tabular}{ll}
    \toprule \\
    i)  $ E_1^2 = E_1$ & $ F_1^2 = F_1$\\
    \midrule \\
    ii)  $\overline{E_i} = - E_i$ & $\overline{F_i} = - F_i$ \\
     \hskip 3mm $E_i^2 = 0$ & $F_i^2 = 0$ \\
   \midrule \\
   iii) $E_iE_1 = F_1E_i$ & $F_iE_1= F_1 F_i$ \\

 \hskip 4mm$E_1E_i = E_iF_1$ & $E_1F_i = F_iF_1$ \\

 \hskip 4mm $E_iE_1 + E_1E_i = E_i$ & $F_iE_1 + E_1F_i = F_i$\\

         \bottomrule
  \end{tabular}
  \label{commutator}
\end{margintable}

\subsection{Generators of $EG_{n-1}(R) \cong Epin_{2n}(R)$}
It follows from properties ii) and iii)  of Table \ref{commutator} that
$E_1E_iE_1 = 0 = E_iE_1E_i$. Similarly we have $F_1F_iF_1 = 0 = F_iF_1F_i$.

Using this one can prove the following commutator relations :
\[1 + aE_iE_j = [1+ aE_iE_1, 1 + E_1E_j] \ \ i \neq 1, j \neq 1,\] 
\[1 + aF_iF_j = [1+ aF_iF_1, 1 + F_1F_j]  \ \ i,j \neq 1, j \neq 1.\]

\vskip 2mm

It follows from  Remark \ref{rmk2} and the above commutator relations that
$EG_{n-1}(R)$ is generated by elements of the type $$1 + aE_1E_i, \hskip 2mm 1+ aE_iE_1, \hskip 2mm 1+ aF_1F_i,
\hskip 2mm 1 + aF_iF_1, \hskip 2mm i\neq 1.$$

\vskip 2mm
\begin{defn}
An elementary matrix has $1$'s on the diagonal and at most one other entry is nonzero. Let $E_{ij}(x)$ denote the elementary matrix
with $x$ in the $(i,j)$ position. The group generated by $n \times n$ elementary matrices is denoted by $E_n(R)$.
\end{defn}
\vskip 5mm

\begin{thm}
 $Epin_6(R) = E_4(R)$.
 \end{thm}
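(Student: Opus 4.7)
Specializing to $n = 3$, the isomorphism $\chi : Spin_6(R) \cong SG_2(R)$ of Theorem \ref{rmk} identifies $Epin_6(R)$ with its image $EG_2(R) \subset GL_4(R)$, and from the preceding discussion $EG_2(R)$ is generated by the eight elements
$$1 + aE_1E_i,\quad 1 + aE_iE_1,\quad 1 + aF_1F_i,\quad 1 + aF_iF_1,\qquad i\in\{2,3\}.$$
The plan is to prove the two inclusions separately: first show each such generator is an elementary matrix, giving $Epin_6(R) \subseteq E_4(R)$, and then recover the remaining elementary matrices in $E_4(R)$ via standard Steinberg commutator identities.

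For the first inclusion I would unpack the $2\times 2$ block forms. Recall $E_1 = \bigl(\begin{smallmatrix} I_2 & 0 \\ 0 & 0 \end{smallmatrix}\bigr)$ and $F_1 = \bigl(\begin{smallmatrix} 0 & 0 \\ 0 & I_2 \end{smallmatrix}\bigr)$, while $E_i, F_i$ for $i\neq 1$ have block form $\bigl(\begin{smallmatrix} 0 & X \\ -\overline{X} & 0 \end{smallmatrix}\bigr)$ with $X$ a $2\times 2$ Suslin matrix. Each of the four products $E_1E_i$, $E_iE_1$, $F_1F_i$, $F_iF_1$ therefore annihilates three of the four $2\times 2$ blocks, leaving just one off-diagonal block supported on a corner. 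For $i=2,3$ the relevant matrices $X$ are, up to sign, matrix units; a short case check shows the surviving block is itself a single-entry matrix $\pm e_{jk}$. Consequently each generator $1 + a(\cdots)$ is a bona fide elementary matrix $E_{jk}(\pm a) \in E_4(R)$, proving $Epin_6(R) \subseteq E_4(R)$.

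For the reverse inclusion $E_4(R) \subseteq Epin_6(R)$ I would tabulate which elementary matrices are produced by the eight generators and check that they occupy enough off-diagonal positions. A direct enumeration gives (up to sign) the seven distinct positions $(1,3),(1,4),(2,3),(2,4),(3,1),(4,1),(4,2)$. The remaining five elementary generators of $E_4(R)$ are then obtained from the Steinberg identity $[E_{ij}(a), E_{jk}(b)] = E_{ik}(ab)$ for distinct $i,j,k$: for instance $E_{12}(a) = [E_{14}(a), E_{42}(1)]$, $E_{21}(a) = [E_{24}(a), E_{41}(1)]$, $E_{34}(a) = [E_{31}(a), E_{14}(1)]$, $E_{43}(a) = [E_{41}(a), E_{13}(1)]$, and finally $E_{32}(a) = [E_{31}(a), E_{12}(1)]$ once $E_{12}$ is in hand.

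The only real obstacle is the bookkeeping: one has to identify precisely which off-diagonal entry each of the eight generators produces, to confirm that the resulting set of elementary matrices is large enough for the Steinberg identities to cover the remaining positions. Once that is verified, the two inclusions combine to give $Epin_6(R) = E_4(R)$.
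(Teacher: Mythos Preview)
Your proposal is correct and follows essentially the same route as the paper: identify $Epin_6(R)$ with $EG_2(R)$, check that each of the eight generators $1+aE_1E_i,\,1+aE_iE_1,\,1+aF_1F_i,\,1+aF_iF_1$ is an elementary matrix, and then recover the missing off-diagonal positions via Steinberg commutators. Two minor remarks: the paper shortens the second inclusion by first observing that $E_1^T=E_1$ and $E_i^T=-F_i$, so $EG_2(R)$ is closed under transpose, leaving only $E_{12}$ and $E_{34}$ to be produced as commutators; and your enumeration actually misses one position --- $1+aE_3E_1$ already gives $E_{32}(a)$, so there are eight (not seven) distinct positions, and your final commutator for $E_{32}$ is redundant.
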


\vskip 2mm
\begin{proof}
 Let $v=(a_1,a_2,a_3)$ and $w =(b_1,b_2,b_3)$. Then 
 $$ S(v,w) =  \begin{pmatrix}
a_1 & 0 & a_2 & a_3\\
0   & a_1 & -b_3 & b_2  \\
-b_2 & a_3 & b_1 & 0 \\
-b_3 & -a_2 & 0  & b_1 \end{pmatrix} .
$$

The group $EG_2(R)$ contains following matrices and their transposes :
\[E_{13}(x) = 1 + xE_1E_2,\]
 \[E_{14}(x) = 1+ xE_1E_3, \]
\[E _{24}(x) = 1 + xE_1F_2, \]
\[E_{23}(x) = 1 - xE_1F_3.\]
Observe that the generators of $EG_2(R)$
$$1 + xE_1E_i, \hskip 2mm 1+ xE_iE_1, \hskip 2mm 1+ xF_1F_i,
\hskip 2mm 1 + xF_iF_1, \hskip 2mm i\neq 1$$ are all elementary matrices. Therefore $EG_2(R) \subseteq E_4(R)$.
\vskip 2mm

 We also have $$E_1^T =E_1, \hskip 2mm E_i^T = -F_i.$$ Thus if $g \in EG_2(R)$, then $g^T \in EG_2(R)$.
Therefore to conclude that $E_4(R) \subseteq EG_2(R)$ it suffices to check that every elementary matrix with a nonzero entry above the diagonal falls into $EG_2(R)$. 
\vskip 2mm
It remains to be checked that $E_{12}(x)$
and $ E_{34}(x)$ also fall in $EG_2(R)$. Indeed we have
\[ E_{12}(x) = [E_{13}(x), E_{32}(1)]\]
and 
\[ E_{34}(x) = [E_{31}(x), E_{14}(1)]. \qedhere\]
\end{proof}
\marginnote[-3 in]{
The generators of $EG_{n-1}(R)$ mentioned above, i.e. $$1 + xE_1E_i, \hskip 2mm 1+ xE_iE_1, \hskip 2mm 1+ xF_1F_i,
\hskip 2mm 1 + xF_iF_1, \hskip 2mm i\neq 1$$ are the same as the ``top and bottom'' matrices
$E(e_i)(x)^{tb}$, $E(e_i^*)(x)^{tb}$ (defined in \cite{JR}). 
These ``top and bottom'' matrices generate of the group $EUm_{n-1}(R)$, the group 
generated by the Suslin matrices $S(e_1\epsilon, e_1{\epsilon^T}^{-1})$ 
where $\epsilon \in E_n(R)$ (see \cite{JR3}, Proposition 2.6).
Hence for odd $n$, we have $$EUm_{n-1}(R) = EG_{n-1}(R) \cong Epin_{2n}(R).$$
\vskip 2mm
Moreover since $Epin_{2n}(R)$ maps onto $EO_{2n}(R)$ it follows that $EUm_{n-1}(R)$ maps onto $EO_{2n}(R)$, the key result in developing the Quillen-Suslin theory
for the pair ($SUm_{n-1}(R),EUm_{n-1}(R)$) where $SUm_{n-1}(R)$ is the group generated by Suslin matrices of unit length.}\label{rmk00}

\end{NoHyper}

\end{document}